\title[Stability of the rotation set]{Stability of the rotation set\\ of area-preserving toral homeomorphisms}
\author{Pierre-Antoine Guih\'eneuf}
\email{pguiheneuf@id.uff.br}
\author{Andres Koropecki}
\thanks{The first author was supported by a research grant from CAPES/IMPA-Brasil. The second author was supported by research grants from CNPq-Brasil and FAPERJ-Brasil}
\email{ak@id.uff.br}
\address{Universidade Federal Fluminense, Instituto de Matem\'atica e Estat\'\i stica, Rua M\'ario Santos Braga S/N, 24020-140 Niteroi, RJ, Brasil\\
Current address of the first author: IMJ-PRG, 4 place Jussieu, case 247, 75252 Paris Cedex 05, France}
\newtheorem{lemme}{Lemma}
\newtheorem{theoreme}[lemme]{Theorem}
\newtheorem{prop}[lemme]{Proposition}
\newtheorem{coro}[lemme]{Corollary}
\theoremstyle{definition}
\theoremstyle{remark}
\newcommand{\N}{\mathbf{N}}
\newcommand{\R}{\mathbf{R}}
\newcommand{\T}{\mathbf{T}}
\newcommand{\Q}{\mathbf{Q}}
\newcommand{\Z}{\mathbf{Z}}
\newcommand{\Ho}{\mathcal{H}}
\newcommand{\varep}{\varepsilon}
\newcommand{\Hom}{\operatorname{Homeo}}
\newcommand{\ud}{\,\mathrm{d}}
\newcommand{\mc}{\mathcal}
\newcommand{\li}{\tilde}
\newcommand{\ie}{i.e.~}
\newcommand{\id}{\mathrm{id}}
\newcommand{\osc}{\operatorname{osc}}
\newcommand{\norm}[1]{\|#1\|}
\begin{document}

\begin{abstract}
We show that if the rotation set of a homeomorphism of the torus is stable under small perturbations of the dynamics, then it is a convex polygon with rational vertices. We also show that such homeomorphisms are $C^0$-generic and have bounded rotational deviations (even for pseudo-orbits). The results hold both in the area-preserving setting and in the general setting. When the rotation set is stable, we give explicit estimates on the type of rationals that may appear as vertices of rotation sets in terms of the stability constants.
\end{abstract}

\maketitle


\section{Introduction}
Let $\T^2 = \R^2/\Z^2$ denote the two-dimensional torus with covering projection $\pi\colon \T^2\to \R^2$, and let $\Hom_0(\T^2)$ be the space of homeomorphisms of $\T^2$ isotopic to the identity, endowed with the $C^0$ topology. 

Given $f\in \Hom_0(\T^2)$ and a lift $\li f\colon \R^2\to \R^2$ of $f$, there is an associated rotation set $\rho(\li f)$ which carries useful dynamical information. It is a compact set defined (in \cite{MR1053617}) as all the possible limits of the form $\lim_{n\to \infty} (\li f^{n_k}(z_k)-z_k)/n_k$ where $(z_k)_{k\in \N}$ is a sequence of points of $\R^2$ and $n_k\to \infty$. Any other lift of $f$ has the same rotation set up to an integer translation, so we may refer to the rotation set $\rho(f)$ of $f$, regarding it as the set $\rho(\li f)$ modulo integer translations\footnote{\ie as an element of $\{\text{compact subsets of $\R^2$}\}/\{\text{integer translations}\}$, not to be confused with $\{\text{compact subsets of }\T^2\}$}.

The usefulness of the rotation set relies on the many results that relate this set with the existence of periodic orbits, topological entropy, and other dynamical properties \cite{MR958891,MR967632,MR1101087,MR1100607}. However, little is known about two aspects of the rotation set:
\begin{itemize}
\item[(1)] How does it vary with the map $f$? When may it change under arbitrarily small perturbations of $f$? Or equivalently, when is it stable (\ie remains unchanged) under small perturbations?
\item [(2)] What are its possible shapes?
\end{itemize}

Regarding the first item, it is known that the rotation set depends in an upper-semicontinuous way on $f$ \cite{MR1053617}, and in a continuous way when it has nonempty interior \cite{MR1100607}. However, concerning the stability there are only two results known to the authors. The first, due to Addas-Zanata \cite{MR2054045}, says that if the rotation set of $f\in \Hom_0(\T^2)$ has an extremal point in $\R^2\setminus \Q^2$, then it is possible to find, arbitrarily close to $f$, a homeomorphism with a new element in its rotation set (\ie one that is not in the rotation set of $f$). In other words, in order for the rotation set to be stable, all extremal points must be rational (\ie in $\Q^2$). Note that this does not rule out the possibility of having infinitely many extremal points.
The second result is due to Passeggi \cite{MR3174742}, who showed that for a generic element of $\Hom_0(\T^2)$, the rotation set is stable and a polygon with rational vertices. As a consequence, for the rotation set to be stable it must be a polygon, ruling out the possibility of infinitely many extremal points.
Some recent works have also studied (1) in the case of one-parameter families of homeomorphisms \cite{calvez2015perturbing, MR3502068}.

About (2), it is known that the rotation set is compact and convex \cite{MR1053617}, every convex polygon of rational vertices is the rotation set of some homeomorphism \cite{MR1176627}, but there are non-polygonal examples \cite{MR1342499,MR3502068} (although the known ones are ``almost polygonal'': they have countably many extremal points). Only recently an example of a compact convex set which is \emph{not} a rotation set was obtained \cite{2015arXiv150309127L}. Note that answering (1) could help to answer (2): one may expect that the rotation sets of generic elements of some well-chosen subspaces of $\Hom_0(\T^2)$ give new examples of shapes of rotation sets.	

In this note we address item (1) in the area-preserving setting. The proofs also work without the area-preservation, and are considerably simpler than those from \cite{MR3174742}. We note that the latter article relies heavily on the genericity of Axiom A dynamics in $\Hom_0(\T^2)$, a fact which no longer holds in the area-preserving setting. We also relate quantitatively the stability of the rotation set with the possible type and amount of extremal points.

Since our results apply both in the general setting and in the area-preserving setting, we will use $\Ho$ to denote either $\Hom_0(\T^2)$ or its subspace $\Hom_{0,\lambda}(\T^2)$ consisting of area-preserving homeomorphisms. All statements involving $\Ho$ apply in both settings. The space $\Ho$ is endowed with the uniform $C^0$ topology. 

We say that $\rho(f)$ is \emph{$\delta$-stable} if any $g\in\Ho$ such that $d_{C^0}(f,g)<\delta$ satisfies $\rho(g) = \rho(f)$; and \emph{stable} if it is $\delta$-stable for some $\delta>0$.

\begin{theoreme}\label{EnsRotGene}
The set of all homeomorphisms with a stable rotation set is open and dense in $\Ho$. Moreover, the rotation set of every such homeomorphism is a convex polygon with rational vertices, and in the area-preserving case this polygon has nonempty interior.
\end{theoreme}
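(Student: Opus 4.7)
\emph{Openness.} If $\rho(f)$ is $\delta$-stable and $d_{C^0}(f,g)<\delta/2$, then $\rho(g)=\rho(f)$ is itself $\delta/2$-stable by the triangle inequality. So the plan is to focus on the structural claim and on density.

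\emph{Structure.} Assume $\rho(f)$ is $\delta$-stable. By Addas-Zanata's theorem recalled in the introduction, every extremal point of $\rho(f)$ is rational; it remains to rule out infinitely many extremal points. I would argue by contradiction: infinitely many extremal points would accumulate at some $v_*\in\partial\rho(f)$, and between two consecutive extremal points $v_a,v_b$ close to $v_*$ the boundary of $\rho(f)$ is a short segment on a supporting line $\ell$. Choosing a rational vector $w\notin\rho(f)$ arbitrarily close to $\ell$, the target is a local $C^0$-small (and, in the conservative setting, area-preserving) perturbation $g$ of $f$ with $w\in\rho(g)$, which would contradict $\delta$-stability. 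The construction would combine a Franks-type realization of $w$ via a periodic orbit with a twist-map insertion in a suitable recurrent region of $f$. The ``nonempty interior'' statement in the area-preserving case is obtained by the same scheme, now realizing a rational vector transverse to the affine hull of $\rho(f)$.

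\emph{Density.} Given $f\in\Ho$ and $\varepsilon>0$, I plan to construct $g$ with $d_{C^0}(f,g)<\varepsilon$ and $\rho(g)$ stable. Pick rational vectors $v_1,\ldots,v_k$ in the interior of $\rho(f)$ whose convex hull $P$ lies within Hausdorff distance $\varepsilon/3$ of $\rho(f)$. By Franks' theorem each $v_i$ is the rotation vector of a periodic orbit of $f$, and a small perturbation turns each of these orbits into a robust one (hyperbolic in the general setting, Moser-elliptic in the conservative one) persisting under further $\eta$-perturbations for some $\eta>0$. Calling the result $f_1$, one has $\rho(g)\supset P$ robustly for $g$ near $f_1$, while upper semi-continuity keeps $\rho(f_1)$ in the $\varepsilon/3$-neighborhood of $\rho(f)$. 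A last $C^0$-small modification trims any extremal rotation vector of $\rho(f_1)$ lying outside $P$, using the same local realization tool as in the structural step, and yields a map $g$ with $\rho(g)=P$, which is then stable by the same argument (robust periodic orbits at the vertices force $\rho\supset P$, upper semi-continuity forces $\rho\subset P$).

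The hard part will be the local realization in the conservative setting: producing a $C^0$-small area-preserving perturbation that creates or removes a prescribed rational rotation vector without unintended side effects on the rest of the rotation set. My plan for this step is to work in a normal-form neighborhood of a periodic or recurrent orbit of $f$ and perform a twist insertion whose rotation number is tuned to the target rational, controlling the $C^0$-size of the perturbation in terms of the denominators involved; this quantitative estimate should also underlie the explicit bounds on vertex rationals announced in the abstract.
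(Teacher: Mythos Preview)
Your openness argument is fine, and stabilizing periodic orbits that realize rational vertices is indeed part of the paper's proof. But your density argument has a genuine gap, and your finiteness argument is vaguer than it needs to be.

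For density, the step where you ``trim any extremal rotation vector of $\rho(f_1)$ lying outside $P$'' is not justified and is in fact the crux of the matter: there is no general tool for \emph{removing} a rotation vector by a $C^0$-small (let alone area-preserving) perturbation. Upper semi-continuity only places $\rho(f_1)$ in a neighborhood of $\rho(f)$, not inside $P$. Your plan also tacitly assumes $\rho(f)$ has nonempty interior when you pick the $v_i$ in it, which need not hold. The paper avoids both difficulties by a completely different mechanism: it invokes the $C^0$-genericity of the shadowing property (Theorem~\ref{Lefeuvre}). If $f$ has shadowing, then every orbit of any nearby $g$ is shadowed by a true orbit of $f$, so $\rho(\li g)\subset\rho(\li f)$ automatically---upper stability comes for free, with no trimming required. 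Then Corollary~\ref{coro:pert} says $\rho(\li f)$ is \emph{already} a rational polygon, and one only has to stabilize its finitely many vertex orbits to get full stability.

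For the structural claim, your accumulation argument is imprecise: it is not clear how two nearby rational extremal points let you realize, by a perturbation of controlled size, a rational vector outside $\rho(f)$. The paper's route is cleaner and quantitative: Proposition~\ref{pro:pert} shows that an extremal point with reduced denominator $q$ prevents $(2/\sqrt{\pi q})$-upper-stability, so $\delta$-stability bounds all denominators by $4/(\pi\delta^2)$; a compact set contains only finitely many rationals with bounded denominator, hence $\rho(f)$ is a polygon. This is also what yields the explicit vertex bounds you allude to at the end---not a twist-insertion estimate.
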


This is a bit surprising: one could have expected that in the area-preserving case, there is generically a wide set of examples of rotation sets, and that they are unstable, as the phenomena used in the proofs of \cite{MR3174742} do not hold, and as the dynamical behaviour of generic conservative homeomorphisms is much richer than that of arbitrary generic homeomorphisms (compare \cite{MR1980335} with \cite{MR2931648}). 

Let us say that $\rho(f)$ is \emph{upper-stable} if there exists $\delta>0$ such that any $g\in\Ho$ such that $d_{C^0}(f,g)<\delta$ satisfies $\rho(g) \subset \rho(f)$. In this case we say that $\rho(f)$ is $\delta$-upper-stable. The previous theorem relies on the following quantitative result:

\begin{prop}\label{pro:pert} Suppose that the rotation set of $f\in \Ho$ has an extremal point of the form $(p_1/q, p_2/q)$ where $p_1$, $p_2$, and $q>1$ are mutually coprime integers. Then $\rho(f)$ is not $(2/\sqrt{\pi q})$-upper-stable.
\end{prop}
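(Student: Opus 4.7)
The plan is to use the extremal rational rotation vector $v = (p_1/q, p_2/q)$ to locate a periodic orbit of $f$, then perturb $f$ near two different points of that orbit to produce two candidate maps whose new rotation vectors straddle $v$; extremality of $v$ will force at least one of the two candidates to lie outside $\rho(f)$.

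First, by a theorem of Franks the extremality of $v \in \rho(\li f)$ yields a point $\li x_0 \in \R^2$ with $\li f^q(\li x_0) = \li x_0 + (p_1, p_2)$. Set $\li x_k = \li f^k(\li x_0)$ and $x_k = \pi(\li x_k)$; the coprimality hypothesis together with $q > 1$ forces $x_0, \dots, x_{q-1}$ to be $q$ distinct points of $\T^2$. An area pigeonhole then produces indices $0 \leq i < j \leq q-1$ with $d_{\T^2}(x_i, x_j) < 2/\sqrt{\pi q}$: indeed, if every pairwise distance were at least $2/\sqrt{\pi q}$, the $q$ open disks of radius $1/\sqrt{\pi q}$ centered at the $x_k$ would be pairwise disjoint, with total area $q \cdot \pi/(\pi q) = 1 = \Leb(\T^2)$, impossible since equal-radius disks on $\T^2$ always leave gaps of positive area. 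Set $\ell := j - i \in \{1, \dots, q-1\}$.

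Next I would construct two perturbations $g_1, g_2 \in \Ho$ of the form $g_k = h_k \circ f$, where $h_k$ is a homeomorphism (area-preserving in the conservative case) supported in a small topological disk of diameter just above $d_{\T^2}(x_i, x_j)$, chosen so that $h_1(x_j) = x_i$ and $h_2(x_i) = x_j$. Then $g_1(x_{j-1}) = x_i$ and $g_2(x_{i-1}) = x_j$, and on the appropriate lifts $\li g_1(\li x_{j-1}) = \li x_i + m$ and $\li g_2(\li x_{i-1}) = \li x_j - m$, where $m \in \Z^2$ is the integer vector closest to $\li x_j - \li x_i$. By construction $d_{C^0}(f, g_k) < 2/\sqrt{\pi q}$. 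Tracing the orbit of $x_0$ under $g_1$ and using equivariance of the lift, after the initial segment $x_0 \to x_1 \to \cdots \to x_{j-1}$ the orbit enters a periodic cycle $x_i \to x_{i+1} \to \cdots \to x_{j-1} \to x_i$ of length $\ell$, with total lifted displacement exactly $m$ per cycle; hence $w_1 := m/\ell \in \rho(g_1)$. The analogous analysis of the $g_2$-orbit of $x_0$ produces a periodic cycle of length $q-\ell$ with rotation vector $w_2 := ((p_1, p_2) - m)/(q - \ell) \in \rho(g_2)$.

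Finally, the identity
\[
\frac{\ell}{q}\, w_1 + \frac{q-\ell}{q}\, w_2 = \frac{(p_1, p_2)}{q} = v
\]
exhibits $v$ as a nontrivial convex combination of $w_1$ and $w_2$. The coprimality $\gcd(p_1, p_2, q) = 1$ rules out $w_1 = v$: if $qm = \ell(p_1, p_2)$, then writing $d = \gcd(q, \ell) < q$ the integer $q/d > 1$ would divide each of $p_1, p_2, q$, contradicting the coprimality; symmetrically $w_2 \neq v$. Hence $v$ lies strictly inside the segment $[w_1, w_2]$, and the extremality of $v$ in $\rho(f)$ forces at least one of $w_1, w_2$ to lie outside $\rho(f)$. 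The corresponding $g_k$ then satisfies $\rho(g_k) \not\subset \rho(f)$ while $d_{C^0}(f, g_k) < 2/\sqrt{\pi q}$, contradicting $(2/\sqrt{\pi q})$-upper-stability. The main technical obstacle I anticipate is the area-preserving version of the local construction in Step 3: producing an area-preserving homeomorphism of a small disk sending a prescribed interior point to another with $C^0$-displacement arbitrarily close to the distance between them. This is standard via Hamiltonian-type isotopies in the smooth category but needs care in the $C^0$ area-preserving setting, and also in the borderline pigeonhole configuration where the orbit is nearly a regular lattice.
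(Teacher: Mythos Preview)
Your argument is essentially the same as the paper's: Franks gives a $q$-periodic orbit, an area pigeonhole puts two of its points within $2/\sqrt{\pi q}$, the two candidate rotation vectors $w_1,w_2$ average to $v$, coprimality rules out $w_1=v$ or $w_2=v$, and extremality of $v$ pushes one of them outside $\rho(\li f)$.

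There is one genuine gap in your perturbation step, and it also dissolves the obstacle you flag at the end. Your $h_k$ is supported in a disk of diameter just above $d_{\T^2}(x_i,x_j)$, but nothing prevents \emph{other} points $x_m$ of the periodic orbit from lying in that disk; if they do, $h_k$ may move them, and then the orbit of $x_0$ under $g_k$ need not be the cycle you describe. The paper sidesteps this entirely by invoking a standard $C^0$ perturbation lemma (valid in both the general and the area-preserving case): given any finite set $E\subset\T^2$ and any injection $\sigma\colon E\to\T^2$ with $d(f(x),\sigma(x))<\varepsilon$ for all $x\in E$, there exists $g\in\Ho$ with $g|_E=\sigma$ and $d_{C^0}(f,g)<\varepsilon$. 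Applying this with $E=\{x_i,\dots,x_{j-1}\}$ and $\sigma=f$ except $\sigma(x_{j-1})=x_i$ yields directly a $g$ with a genuine periodic orbit of rotation vector $w_1$, and symmetrically for $w_2$. This is exactly the ``local area-preserving swap'' you were worried about, packaged as a black box.
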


We note that the above proposition, together with the result from \cite{MR2054045} (which applies in the area-preserving setting as well) implies the following:
\begin{coro} \label{coro:pert} If the rotation set of $f\in \Ho$ is $\delta$-upper-stable, then all extremal points have rational coordinates, and if written in reduced form the denominator of these rational numbers is bounded above by $4/(\pi\delta^2)$. In particular there are finitely many such extremal points and $\rho(f)$ is a polygon.
\end{coro}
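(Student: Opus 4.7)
The plan is to combine the Addas--Zanata result \cite{MR2054045}, cited just before the corollary, with Proposition \ref{pro:pert}, and then close with a trivial counting argument. First I would establish that every extremal point of $\rho(f)$ must be rational. If $v$ is an extremal point of $\rho(f)$ with $v\notin\Q^2$, then \cite{MR2054045} produces, arbitrarily close to $f$, some $g\in\Ho$ with a new element in its rotation set, \ie with $\rho(g)\not\subset\rho(f)$. Taking such $g$ within $C^0$-distance $\delta$ of $f$ contradicts $\delta$-upper-stability, forcing $v\in\Q^2$.

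Next I would derive the denominator bound. Let $v=(a_1/b_1,a_2/b_2)$ be an extremal point with each coordinate written in lowest terms, $b_i\geq 1$. Set $q=\operatorname{lcm}(b_1,b_2)$ and $p_i=a_iq/b_i$, so that $v=(p_1/q,p_2/q)$. A short $p$-adic valuation check shows $\gcd(p_1,p_2,q)=1$: any prime dividing $q$ divides some $b_j$, and then the valuations of $p_1$ and $p_2$ cannot both be positive. Under this collective coprimality condition (the natural one for a reduced rational vector on $\T^2$, and the one compatible with the form of the conclusion we wish to reach), Proposition \ref{pro:pert} applies whenever $q>1$ and gives that $\rho(f)$ is not $(2/\sqrt{\pi q})$-upper-stable. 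Hence $\delta$-upper-stability forces $\delta\leq 2/\sqrt{\pi q}$, i.e. $q\leq 4/(\pi\delta^2)$. Since $b_1,b_2$ both divide $q$, each reduced denominator is bounded by $4/(\pi\delta^2)$; the case $q=1$ is trivial.

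For the last claim, $\rho(f)$ is compact, and any bounded region of $\R^2$ contains only finitely many rationals whose reduced denominators are bounded by a fixed constant. Hence $\rho(f)$ has finitely many extremal points, and being convex it equals their convex hull (Krein--Milman in $\R^2$), so it is a convex polygon with rational vertices.

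The one step I expect to be slightly delicate is the coprimality point in the second paragraph: for an extremal point like $(1/2,1/3)$, the common-denominator representation $(3/6,2/6)$ is not \emph{pairwise} coprime, only collectively so. I am reading the hypothesis ``$p_1,p_2,q$ mutually coprime'' in Proposition \ref{pro:pert} as $\gcd(p_1,p_2,q)=1$, which is the condition that makes $(p_1,p_2)/q$ the rotation vector of a periodic orbit of exact period $q$ on $\T^2$ and should be what the proof of the proposition really uses. If the statement were instead intended strictly as pairwise coprimality, one would need an auxiliary reduction, e.g.\ restricting attention to a one-parameter slice of directions supporting $v$ as an extremal point; but this does not change the final quantitative bound.
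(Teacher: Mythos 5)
Your proposal follows the same route as the paper's proof: Theorem~\ref{AZ} (Addas--Zanata) gives rationality of all extremal points, Proposition~\ref{pro:pert} gives the denominator bound via contraposition, and compactness plus convexity yields the polygonal conclusion. The extra care you take over the meaning of ``mutually coprime''---reading it as $\gcd(p_1,p_2,q)=1$ and verifying that an arbitrary rational extremal point can be put in the required form $(p_1/q,p_2/q)$---is exactly the condition the proof of Proposition~\ref{pro:pert} actually uses (it is what prevents $iv\in\Z^2$ for $1\leq i\leq q-1$), so you correctly fill a step the paper leaves implicit; as a minor aside, the paper's own proof has a sign typo, writing $q>4/(\pi\delta^2)$ where $q<4/(\pi\delta^2)$ is meant.
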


Another useful observation is a characterization of the upper stability of the rotation set of $f$ in terms of the $\delta$-pseudo-rotation set $\rho_\delta(f)$, which is defined similarly using $\delta$-pseudo-orbits (see Section \ref{sec:pseudo}). 

\begin{prop}\label{pro:pseudo-stable} If the rotation set of $f$ is $\delta$-upper-stable, then $\rho(f)=\rho_{\delta/2}(f)$. 
\end{prop}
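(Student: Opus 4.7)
The inclusion $\rho(f)\subset\rho_{\delta/2}(f)$ is immediate, since every orbit of a lift $\tilde f$ is trivially a $\delta/2$-pseudo-orbit. The content lies in the reverse inclusion. Fix $v\in\rho_{\delta/2}(f)$ and $\eta>0$. The plan is to produce $g\in\Ho$ with $d_{C^0}(f,g)<\delta$ admitting some $v'\in\rho(g)$ with $|v'-v|<\eta$. Once this is achieved, $\delta$-upper-stability gives $v'\in\rho(g)\subset\rho(f)$; sending $\eta\to 0$ and using that $\rho(f)$ is closed then yields $v\in\rho(f)$.

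To build $g$, first select, using the definition of $\rho_{\delta/2}(f)$, a $\delta/2$-pseudo-orbit $(\zeta_0,\ldots,\zeta_n)$ of $\tilde f$ with $n$ large and $(\zeta_n-\zeta_0)/n$ within $\eta/2$ of $v$. Let $p\in\Z^2$ be the integer vector nearest to $\zeta_n-\zeta_0$, so $|p-(\zeta_n-\zeta_0)|\leq\sqrt{2}/2$. By linearly interpolating the last $m:=\lceil 2\sqrt{2}/\eta\rceil$ entries of the pseudo-orbit one obtains a $(\delta/2+\eta/4)$-pseudo-orbit $(\zeta_0',\ldots,\zeta_n')$ with $\zeta_n'=\zeta_0'+p$ and $|p/n-v|<\eta$ (for $n$ sufficiently large, the extra displacement at each of the interpolated steps is at most $|p-(\zeta_n-\zeta_0)|/m$ plus a negligible term controlled by the modulus of continuity of $\tilde f$). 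A further arbitrarily small generic perturbation of the $\zeta_i'$ ensures that the projections $\pi(\zeta_i')$ and $\pi(\tilde f(\zeta_i'))$ are pairwise distinct for $0\leq i\leq n-1$.

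Next, realize this periodic pseudo-orbit as an actual orbit of a perturbation of $f$. A finite interpolation lemma produces $h\in\Ho$ with $d_{C^0}(\id,h)<\delta/2+\eta/2$ such that $h(\pi(\tilde f(\zeta_i')))=\pi(\zeta_{i+1}')$ for every $0\leq i\leq n-1$; in the non-conservative setting this is elementary, using small isotopies supported on disjoint neighborhoods of short paths joining each source to its target, and in the area-preserving setting one invokes the analogous conservative construction along disjoint tubes of equal area. Setting $g:=h\circ f$, one has $d_{C^0}(f,g)=d_{C^0}(\id,h)<\delta$ and, choosing the lift $\tilde g=\tilde h\circ\tilde f$ with $\tilde h$ close to the identity, $\tilde g^i(\zeta_0')=\zeta_i'$ for all $0\leq i\leq n$. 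Since $\zeta_n'=\zeta_0'+p$, the point $\pi(\zeta_0')$ is periodic under $g$ with rotation vector $p/n\in\rho(g)$, as required.

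The main obstacle is the finite interpolation lemma in the area-preserving case: moving a prescribed finite family of points to specified nearby targets by an area-preserving homeomorphism whose $C^0$-size is sharply controlled by $\delta/2+\eta/2$. Once the source and target points are pairwise distinct this is a standard application of area-preserving extension techniques (in the spirit of Oxtoby--Ulam or Brown), but keeping the $C^0$-bound sharp requires supporting the isotopies in thin tubes along paths whose length only slightly exceeds $\delta/2$.
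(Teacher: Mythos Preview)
Your overall strategy---close a long $\delta/2$-pseudo-orbit to a periodic orbit of some $g$ with $d_{C^0}(f,g)<\delta$ and then invoke upper-stability---is precisely the mechanism the paper exploits. The gap is in your closing step. When you ``linearly interpolate the last $m$ entries'' you are shifting the points $\zeta_i$ by vectors $s_i$ whose norm ranges up to $|p-(\zeta_n-\zeta_0)|\le\sqrt 2/2$. Writing $\phi=\tilde f-\id$ and $\zeta_i'=\zeta_i+s_i$, one computes
\[
\bigl|\tilde f(\zeta_{i-1}') - \zeta_i'\bigr| \;\le\; \bigl|\tilde f(\zeta_{i-1})-\zeta_i\bigr| \;+\; |s_{i-1}-s_i| \;+\; \bigl|\phi(\zeta_{i-1}+s_{i-1})-\phi(\zeta_{i-1})\bigr|.
\]
The first two terms are indeed bounded by $\delta/2$ and $|p-(\zeta_n-\zeta_0)|/m\le\eta/4$, but the third is only bounded by the modulus of continuity of $\phi$ at scale $|s_{i-1}|\le\sqrt 2/2$: this is a \emph{fixed} constant depending on $f$ alone, it does not tend to $0$ as $n\to\infty$ or $\eta\to 0$, and it may well exceed $\delta/2$. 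So your interpolated sequence is in general not a $(\delta/2+\eta/4)$-pseudo-orbit, and the resulting $g$ need not lie within $\delta$ of $f$. (Incidentally, the ``main obstacle'' you flag---the area-preserving finite interpolation lemma---is not the issue; that is exactly the $C^0$ perturbation lemma quoted in the paper.)

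The paper avoids shifting points altogether. It uses pigeonhole on $\T^2$: along any $\delta/2$-pseudo-orbit one finds indices $i<j$ with $d(x_i,x_j)<\delta/2$, and then $\tilde x_i,\dots,\tilde x_{j-1},\tilde x_i+w$ (with $w\in\Z^2$ the integer nearest $\tilde x_j-\tilde x_i$) is a genuine periodic $\delta$-pseudo-orbit with no interpolation needed; the perturbation lemma yields $g$ with $d_{C^0}(f,g)<\delta$ and $w/(j-i)\in\rho(\tilde g)\subset\rho(\tilde f)$. To recover the full displacement $(\tilde x_n-\tilde x_0)/n$ rather than a single sub-segment, the paper decomposes the whole pseudo-orbit into at most $4/(\pi(\delta/2)^2)$ such returnable segments and takes the corresponding convex combination of rotation vectors; this actually gives the stronger uniform bound $d(\tilde x_n-\tilde x_0,\,n\rho(\tilde f))\le C(f,\delta)$, from which $\rho_{\delta/2}(f)\subset\rho(f)$ follows by dividing by $n$.
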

A converse property follows immediately from the definitions: if the $\delta/2$-pseudo-rotation set is equal to the rotation set, then $\rho(f)$ is $\delta/2$-stable.

Finally, let us note that if $\li f$ is a lift of $f\in \Ho$ and $\rho(\li f)$ has nonempty interior, then $d(\li f^n(z) - z, n\rho(\li f))$ is bounded above by a constant depending only on $f$, a property known as \emph{bounded rotational deviations} \cite{2015arXiv150309127L,JMJ:10378505,Addas-Zanata01042015}. Our last result says that if the rotation set is upper-stable, a similar property holds replacing orbits by $\delta$-pseudo-orbits if $\delta$ is small enough:

\begin{prop}\label{pro:deviations} If $f\in \Ho$ has a $\delta$-upper-stable rotation set, there exists a constant $C>0$ such that given any lift $\li f$ of $f$ and a $\delta/2$-pseudo-orbit $(\li x_i)_{i\in \N}$ for $\li f$, one has $d(\li x_n- \li x_0, n\rho(\li f))\leq C$ for all $n\in \N$.
\end{prop}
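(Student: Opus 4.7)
The plan is to argue by contradiction. Suppose no such $C$ exists; then I can find a sequence of finite $\delta/2$-pseudo-orbits $(\li x^{(k)}_i)_{0 \le i \le n_k}$ of $\li f$ with $\Delta_k := d(\li x^{(k)}_{n_k} - \li x^{(k)}_0, n_k \rho(\li f)) \to \infty$. Since per-step displacements are bounded by $K := \|\li f - \Id\|_\infty + \delta/2$ and $\rho(\li f)$ is compact, $n_k \to \infty$. For each $k$, the separating hyperplane theorem applied to $\li x^{(k)}_{n_k} - \li x^{(k)}_0$ and the convex set $n_k \rho(\li f)$ yields a unit vector $u_k$ with
\[
u_k \cdot (\li x^{(k)}_{n_k} - \li x^{(k)}_0) \ge n_k \alpha(u_k) + \Delta_k,
\]
where $\alpha(u) := \sup_{v \in \rho(\li f)} u \cdot v$ is the support function. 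After extracting a subsequence, assume $u_k \to u$.

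The central step is to turn each such pseudo-orbit into a perturbation of $f$ violating $\delta$-upper-stability. I project to $\T^2$ to obtain the $\delta/2$-pseudo-orbit $y^{(k)}_i := \pi(\li x^{(k)}_i)$, and close it up by appending a $\delta/2$-pseudo-orbit of $f$ in $\T^2$ of length $m_k \le m_0$ from $y^{(k)}_{n_k}$ back to $y^{(k)}_0$, where $m_0 = m_0(f,\delta)$ is a uniform constant; the existence of such a bridge of bounded length is the key technical point. Applying the perturbation construction from the proof of Proposition~\ref{pro:pseudo-stable}, the resulting closed $\delta/2$-pseudo-orbit of length $N_k := n_k + m_k$ is realized as a true periodic orbit of some $g_k \in \Ho$ with $d_{C^0}(g_k, f) < \delta/2$. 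Taking the lift $\li g_k$ close to $\li f$, the integer period-displacement is exactly $w_k = (\li x^{(k)}_{n_k} - \li x^{(k)}_0) + \li v_k$, where $\li v_k \in \R^2$ is the lifted bridge displacement with $|\li v_k| \le m_0 K$.

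By $\delta$-upper-stability applied to $g_k$, the rotation vector $w_k/N_k$ lies in $\rho(\li f)$, so $u_k \cdot w_k \le N_k \alpha(u_k)$. Combining with the lower bound on $u_k \cdot (\li x^{(k)}_{n_k} - \li x^{(k)}_0)$ and $|u_k \cdot \li v_k| \le m_0 K$, one obtains
\[
\Delta_k \le m_0 (\alpha(u_k) + K),
\]
a uniform bound contradicting $\Delta_k \to \infty$. The hardest part will be establishing the uniform bridge length $m_0$: arbitrary points of $\T^2$ need not in general be joinable by a $\delta/2$-pseudo-orbit of $f$ of uniformly bounded length (consider $f$ with strong trapping dynamics). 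However, in such pathological cases the pseudo-orbits are themselves confined to bounded regions of $\R^2$ and have trivially bounded deviations, so the relevant case is that of genuinely escaping pseudo-orbits; for these, the bridge can be produced by a chain-transitivity argument leveraging the pigeonhole returns of long pseudo-orbits on the compact torus.
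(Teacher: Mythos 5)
Your argument hinges on the existence of a uniform bound $m_0 = m_0(f,\delta)$ on the length of a $\delta/2$-pseudo-orbit ``bridge'' joining any point of $\T^2$ to any other, and this is exactly where the proof breaks down. Such a bound does not exist in general: if $f$ has two attracting fixed points $p,q$ with disjoint trapping neighborhoods of radius $>\delta/2$, no $\delta/2$-pseudo-orbit leaves the basin of $p$, so no bridge from $p$ to $q$ exists at any length. Your fallback --- that when bridges fail the pseudo-orbits are ``confined to bounded regions of $\R^2$ and have trivially bounded deviations'' --- is an unproved assertion, and it is essentially the conclusion of the proposition restated, so the argument is circular at that point. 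The vague appeal to ``chain-transitivity'' and ``pigeonhole returns'' gestures toward the right idea but is not carried out; in particular it does not yield a bridge from the \emph{terminal} point of the pseudo-orbit back to its \emph{initial} point, which is what your closing construction requires.

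The paper's proof sidesteps the bridge entirely and is not by contradiction. Instead of trying to close the whole pseudo-orbit, it decomposes it: by an area/pigeonhole argument, one can extract indices $-1=i_{-1}<i_0<\cdots<i_k=n$ with $k\le 4/(\pi\delta^2)$ so that each block $\li x_{i_{j-1}+1},\dots,\li x_{i_j}$ \emph{almost closes up} ($d(x_{i_j},x_{i_{j-1}+1})<\delta$). Each such block is closed into a periodic $\epsilon$-pseudo-orbit and realized as a genuine periodic orbit of some $g$ with $d_{C^0}(f,g)<\epsilon$, so by $\epsilon$-upper-stability its rotation vector lies in $\rho(\li f)$. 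The at most $k+1$ leftover single steps and closing errors contribute a bounded amount, giving an explicit constant $C$. This ``decompose into self-returning loops'' idea is precisely the rigorous version of your ``pigeonhole returns'' remark; if you want to rescue your proof, you should abandon the global bridge and instead close up the returning sub-segments of the pseudo-orbit itself.
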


In particular, this proves that the bounded rotational deviations holds on an open and dense subset of  $\Ho$.
The constant $C$ is explicitly given in Section \ref{sec:pseudo} in terms of $f$ and $\delta$.
We remark that when the rotation set is not upper-stable, a property of bounded rotational deviations for pseudo-orbits as in the previous proposition can never hold, so this property characterizes the upper stability.

\section{Proofs of the main results}
We will often use the following elementary perturbation lemma (see \cite{Oxto-meas} or \cite{MR2931648}).

\begin{lemme}[$C^0$ perturbation lemma]\label{LemExtension}
Let $E\subset \T^2$ be a finite set, $\sigma : E \to \T^2$ an injective map, and $f\in\Ho$. If $d(f(x), \sigma(x))<\varep$ for all $x\in E$, then there exists $g\in \Ho$ such that $g_{|E} = \sigma$ and $d_{C^0}(f,g)<\varep$.
\end{lemme}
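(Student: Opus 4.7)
The plan is to reduce the statement to a local topological construction. Writing $h := g \circ f^{-1}$, one has $d_{C^0}(g, f) = \sup_z d(z, h(z)) = d_{C^0}(h, \id)$; since $\Ho$ is a group, it suffices to produce $h \in \Ho$ with $h(a_x) = b_x$ for all $x \in E$ and $d_{C^0}(h, \id) < \varep$, where we write $a_x := f(x)$ and $b_x := \sigma(x)$. By hypothesis these form two injective families in $\T^2$ with $d(a_x, b_x) < \varep$.

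For each $x$, fix a simple arc $\gamma_x$ from $a_x$ to $b_x$ of length less than $\varep$ (for instance a short geodesic). By a standard 2-dimensional general-position argument---perturb so that intersections between distinct arcs are isolated and transverse, then resolve each transverse crossing by a local reroute adding arbitrarily little length---we may assume the $\gamma_x$ are pairwise disjoint except at endpoints whose coincidence is forced by $a_x = b_y$ for some $y \ne x$ (the injectivity of $f$ and of $\sigma$ prevents $a_x = a_y$ or $b_x = b_y$). Concatenating along the forced coincidences, we obtain a disjoint collection of simple arcs or simple closed curves $\Gamma_1, \ldots, \Gamma_m$. Choose pairwise disjoint thin tubular neighborhoods $T_j \supset \Gamma_j$.

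We then define $h$ so that on each $T_j$ it is the time-one map of an isotopy supported in $T_j$ which flows points along $\Gamma_j$ (at variable speed when the consecutive $\gamma_x$ within the component differ in length), arranged so that each marked $a_x \in \Gamma_j$ maps precisely to $b_x$, and interpolated to the identity near $\partial T_j$; set $h = \id$ outside $\bigcup_j T_j$. Then $h \in \Hom_0(\T^2)$, and the displacement of any point is at most the length of a single $\gamma_x$, hence less than $\varep$. In the area-preserving setting, the isotopy on each $T_j$ is chosen area-preserving, which is made possible by the classical Oxtoby--Ulam type results (see \cite{Oxto-meas}) asserting that the identity component of the area-preserving homeomorphism group of a disk (fixing the boundary) acts transitively on finite tuples of interior points. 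The main obstacle is the general-position step producing the disjoint short arcs $\gamma_x$: when the matched pairs are \emph{entangled}, the rerouting must be done carefully in order to keep each $\gamma_x$ of diameter less than $\varep$; the flexibility of dimension 2 always allows this, and the rest of the argument is routine local topology.
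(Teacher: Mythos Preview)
The paper does not give its own proof of this lemma; it simply states it as an ``elementary perturbation lemma'' and points to the references \cite{Oxto-meas} and \cite{MR2931648}. So there is no in-paper argument to compare yours against, and what you have written is already more than the paper supplies.

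Your reduction $h:=g\circ f^{-1}$ is correct and is the standard first move. The arc-and-tube construction you sketch does work in the non-conservative case: once the short arcs $\gamma_x$ are made disjoint (except at forced coincidences) and concatenated into the curves $\Gamma_j$, a piecewise-linear-in-arclength reparametrisation of each $\Gamma_j$ sends every marked $a_x$ to its $b_x$, is monotone because the $a$- and $b$-marks alternate along $\Gamma_j$, and displaces points by at most $\max_x \operatorname{length}(\gamma_x)<\varep$; thickening and damping to the identity on $\partial T_j$ then costs only the tube width. The general-position/uncrossing step you flag as ``the main obstacle'' is genuinely the place where the work lies, but in dimension~$2$ it is routine: after a transversality perturbation the crossings are finite, and resolving them by detours in nested, shrinking disks avoids creating new crossings while adding arbitrarily little length.

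The one place your sketch is thin is the area-preserving case. Invoking Oxtoby--Ulam to get transitivity of the measure-preserving group on finite configurations in a disk gives \emph{some} $h$ supported in $T_j$ with the right values at the marked points, but a priori only with displacement bounded by $\diam(T_j)$, and $\Gamma_j$ may be much longer than $\varep$ when several $\gamma_x$ are chained together. One really needs a construction that respects the $\varep$-bound, for instance by handling each $\gamma_x$ with a compactly supported Hamiltonian (or radius-dependent rotation) in its own small disk after first ensuring the $\gamma_x$ are pairwise disjoint, rather than working on the whole $\Gamma_j$ at once. This is again classical, but it is not the statement you quote, so that sentence should be sharpened.
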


We also need the following:
\begin{theoreme}[Addas-Zanata, \cite{MR2054045}] \label{AZ}
If the rotation set of $f\in \Ho$ is upper-stable, then all its extremal points have rational coordinates.
\end{theoreme}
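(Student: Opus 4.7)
The plan is to argue by contrapositive: assume $\rho(f)$ has an extremal point $v$ with at least one irrational coordinate, and construct, for every $\delta > 0$, a perturbation $g \in \Ho$ with $d_{C^0}(f,g) < \delta$ whose rotation set contains a vector outside $\rho(f)$. Since $v$ is extremal in the compact convex set $\rho(f) \subset \R^2$, there is a unit vector $w$ such that $v$ is the unique maximizer of $\langle \cdot, w\rangle$ on $\rho(f)$. Because $v \notin \Q^2$, for every $\eta > 0$ one can pick a rational $v' = (p_1/q, p_2/q)$ with $|v - v'| < \eta$ and $\langle v', w\rangle > \langle v, w\rangle$; the strict inequality forces $v' \notin \rho(f)$. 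The goal then reduces to producing $v' \in \rho(g)$, which will follow from exhibiting an $n$-periodic orbit of $g$ with lifted displacement $(Mp_1, Mp_2) \in \Z^2$ for some $n = Mq$.

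Dynamically, I invoke the Misiurewicz--Ziemian fact that every extremal rotation vector is realized by an ergodic invariant measure $\mu$. Picking a generic point $z_0$ for $\mu$ and a lift $\li z_0$, the Birkhoff theorem applied to the displacement cocycle yields $D_n := \li f^n(\li z_0) - \li z_0 = nv + o(n)$. Choosing $n = Mq$ large enough, $D_n$ lies within $n(\eta + \varep_n)$ of $(Mp_1, Mp_2)$, where $\varep_n \to 0$. I then select $k$ well-spaced times $t_1 < \cdots < t_k$ in $\{0, \ldots, n-1\}$ and, applying Lemma \ref{LemExtension} inductively, I shift the orbit of $z_0$ at each $t_j$ by a small correction vector $\alpha_j$, with $\sum_j \alpha_j = (Mp_1, Mp_2) - D_n$. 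Each per-site correction has norm of order $n(\eta + \varep_n)/k$; taking $k \sim n$ and choosing $\eta < \delta/4$, then $n$ large enough that $\varep_n < \delta/4$, keeps all corrections below $\delta$, so the $C^0$-size of the perturbation stays below $\delta$. The resulting $g$ has a periodic orbit of rotation vector $v' \notin \rho(f)$, contradicting upper-stability.

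The principal obstacle is the nonlinear propagation of the perturbation along the orbit: a shift $\alpha_j$ inserted at time $t_j$ displaces all subsequent points of the orbit away from those of the unperturbed $f$-orbit, and uniform continuity of $f$ only controls this error over bounded iterate counts. A careful inductive construction, together with a judicious choice of block length between consecutive perturbation times and a blockwise application of Lemma \ref{LemExtension}, is needed to keep each per-step perturbation within $\delta$ while still attaining the prescribed total displacement correction. A minor secondary point is to check that the perturbation sites remain pairwise distinct on $\T^2$; this holds because the $f$-orbit of a generic point of $\mu$ is nonperiodic (as $v$ has an irrational coordinate) and the shifts can be taken arbitrarily small. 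The area-preserving case requires no modification, since Lemma \ref{LemExtension} is already stated inside $\Ho$.
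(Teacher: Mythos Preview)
First, note that the paper does not give its own proof of this statement; it is quoted from \cite{MR2054045}, with only the remark that the perturbations used there are of the type in Lemma~\ref{LemExtension}, so the argument transfers to the area-preserving setting unchanged. There is thus no ``paper's proof'' to compare against beyond this citation.

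Your proposal has a genuine gap at precisely the point you yourself flag as the ``principal obstacle.'' You want to spread the total correction $nv' - D_n$ (of norm roughly $n\eta$) over $k\sim n$ perturbation sites, each of size $\sim\eta + \varep_n$. But once you perturb at time $t_1$, the subsequent block of your pseudo-orbit is the true $f$-orbit of a \emph{new} point $y_{t_1}$, not of the $\mu$-generic point $z_0$; Birkhoff's theorem gives no control over the displacement of this block, since $\mu$-generic points form a set of full $\mu$-measure, not an open set, and an arbitrarily small shift can land on an orbit with completely different rotational behavior. Hence the cumulative displacement of the concatenated pseudo-orbit is not approximately $nv$, and you have no mechanism to force it to equal $nv'$. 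The alternative of keeping $y_i = f^i(z_0)$ throughout and adding accumulated shifts fails for the complementary reason: the running sum $\sum_{t_j\le i}\alpha_j$ grows to order $n\eta$, so $d(f(y_i), y_{i+1})$ is no longer small and $(y_i)$ ceases to be a $\delta$-pseudo-orbit. Your phrase ``a careful inductive construction \dots is needed'' is a restatement of the difficulty, not a resolution.

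Addas-Zanata's argument bypasses the propagation issue altogether: rather than distributing many corrections, one uses a recurrence result for real cocycles over an ergodic system (Atkinson's lemma) applied to $\psi=\langle \tilde f-\id,w\rangle-\langle v,w\rangle$ to produce a \emph{single} time $n$ at which $f^n(z_0)$ is $\delta$-close to $z_0$ and the nearest integer vector $W$ to $\tilde f^n(\tilde z_0)-\tilde z_0$ satisfies $W/n\notin\rho(\tilde f)$; the irrationality of $v$ is what guarantees such $n$ exist. A single application of Lemma~\ref{LemExtension} then closes the orbit.
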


The original statement of the above theorem does not mention the area-preserving case, but the proof applies in that setting without modification (since the perturbations used are as in Lemma \ref{LemExtension}).

Finally, we will use the genericity of the shadowing property. We say that a sequence $(x_i)_{i\in I}$ is $\varep$-shadowed by an orbit of $f$ if there exists $x$ such that $d(f^i(x), x_i)<\varep$ for all $i\in I$.

\begin{theoreme}[Lefeuvre, Guih\'eneuf, \cite{Lefeuvre}]\label{Lefeuvre}
A generic homeomorphism $f\in\Ho$ has the shadowing property, \ie for every $\varep>0$, there exists $\delta>0$ such that every $\delta$ pseudo-orbit of $f$ is $\varep$-shadowed by some true orbit of $f$.
\end{theoreme}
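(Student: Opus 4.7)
The plan is to show that for each fixed $\varep>0$, the set
\[S_\varep=\bigl\{f\in\Ho : \exists\,\delta>0 \text{ such that every $\delta$-pseudo-orbit of $f$ is $\varep$-shadowed}\bigr\}\]
is open and dense in $\Ho$; the shadowing property then holds on the residual set $\bigcap_{n\geq 1} S_{1/n}$.

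For density, I would approximate $f$ by a homeomorphism $g$ whose dynamics, restricted to a fine finite grid $E\subset\T^2$, is purely combinatorial. Pick an $\eta$-dense finite set $E\subset\T^2$ with $\eta\ll\varep$. In the non-conservative case, choose a map $\sigma\colon E\to E$ with $d(\sigma(x),f(x))$ small and apply Lemma \ref{LemExtension} to obtain $g\in\Ho$ with $g|_E=\sigma$. In the area-preserving case $\sigma$ must be a bijection of $E$: one achieves this by combining the perturbation lemma with a Lax--Alpern-type theorem, which says that area-preserving homeomorphisms are $C^0$-approximable by cyclic permutations of dyadic partitions. Now pick $\delta\ll\eta$: any $\delta$-pseudo-orbit $(x_i)$ of $g$ is ``snapped'' to $E$ by sending each $x_i$ to its closest grid point $x_i'$, and the triangle inequality forces $(x_i')$ to coincide with the $\sigma$-iterates of $x_0'$. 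The true $g$-orbit of $x_0'$ (periodic in the permutation case) then $\varep$-shadows $(x_i)$, giving $g\in S_\varep$.

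For openness, suppose $g\in S_\varep$ has been built as above, with combinatorial skeleton $\sigma$ on an $\eta$-dense grid $E$. If $g'$ satisfies $d_{C^0}(g,g')<\eta/10$, then the nearest-point dynamics on $E$ induced by $g'$ is still $\sigma$. Consequently, the same snapping argument applied to a $\delta$-pseudo-orbit of $g'$ (with $\delta$ small relative to $\eta$) produces a $\sigma$-orbit in $E$, and the $g'$-orbit of the corresponding grid point $\varep$-shadows the original sequence modulo an $O(\eta)$ error. Choosing $\eta$ somewhat smaller than $\varep$ from the start absorbs this loss and yields a $C^0$-open neighborhood of $g$ contained in $S_\varep$.

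The main obstacle is the area-preserving case: ensuring that $\sigma$ can be chosen as an honest permutation of $E$ and then realized as the restriction to $E$ of an element of $\Hom_{0,\lambda}(\T^2)$. This rests on the Lax--Oxtoby--Ulam--Alpern circle of results guaranteeing approximation of area-preserving homeomorphisms by cyclic permutations of fine dyadic partitions, together with an area-preserving version of Lemma \ref{LemExtension}. Once this combinatorial skeleton is in place the shadowing analysis becomes essentially automatic, since every orbit of a permutation is periodic and hence trivially shadowed by itself.
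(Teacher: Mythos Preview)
The paper does not prove this statement: Theorem~\ref{Lefeuvre} is quoted from \cite{Lefeuvre} and used as a black box in the proof of Theorem~\ref{EnsRotGene}. There is therefore no in-paper proof to compare against; what follows is an assessment of your sketch on its own merits.

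Your outline has the right overall architecture (show that each $S_\varep$ contains a dense open set, then intersect), and the idea of using a Lax--Alpern permutation skeleton in the area-preserving case is indeed what underlies \cite{Lefeuvre}. However, two of the steps as written do not go through.

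\textbf{The snapping argument.} You want to force $\sigma(x_i')=x_{i+1}'$ from the chain of inequalities
\[
d\bigl(\sigma(x_i'),x_{i+1}'\bigr)\le d\bigl(g(x_i'),g(x_i)\bigr)+d\bigl(g(x_i),x_{i+1}\bigr)+d\bigl(x_{i+1},x_{i+1}'\bigr)\le \omega_g(\eta)+\delta+\eta.
\]
For this to pin down a unique grid point you need the right-hand side to be smaller than the minimal spacing of $E$. But an $\eta$-dense grid has spacing of order $\eta$, so you would need $\omega_g(\eta)+\delta<c\eta$ for some fixed $c<1$, which fails for a general homeomorphism $g$ (and you have no control on $\omega_g$ after applying Lemma~\ref{LemExtension}). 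The snapping step therefore does not determine $(x_i')$ as a $\sigma$-orbit. In the actual argument one does not snap pointwise; rather one arranges $g$ to permute whole dyadic \emph{cells} (not merely their centers), so that the cell containing $x_i$ is already determined by the dynamics, and the combinatorics is robust.

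\textbf{The openness argument.} Even granting that a $\delta$-pseudo-orbit of $g'$ snaps to a $\sigma$-orbit on $E$, you then propose to shadow with the $g'$-orbit of the initial grid point $x_0'$. But $g'|_E\neq\sigma$, so $g'^{\,i}(x_0')$ drifts away from $\sigma^i(x_0')$; the error is not $O(\eta)$ uniformly in $i$, it compounds. Knowing only that the ``nearest-point dynamics'' of $g'$ equals $\sigma$ does not prevent this drift. To get an honest open condition one needs something stronger than $g|_E=\sigma$: for instance, that $g$ rigidly translates each dyadic cell onto another (so the cell-permutation persists as an actual semiconjugacy for all nearby $g'$), or that the finitely many periodic $\sigma$-cycles are realized by \emph{stable} periodic orbits of $g$ which then survive for $g'$ and provide the shadowing orbits. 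Either refinement requires an extra construction beyond a single application of Lemma~\ref{LemExtension}.
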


\subsection{Proof of Proposition \ref{pro:pert}} 
Fix $f\in \Ho$ and a lift $\li f$ such that $\rho(\tilde f)$ has a rational extremal point $v=(p_1/q,p_2/q)$ with $p_1, p_2, q$ mutually coprime and $q>1$. 
We will find $g\in \Ho$ such that $d_{C^0}(f,g)<\delta := 2/\sqrt{\pi q}$ with a new element in its rotation set.

By a result of Franks \cite{MR967632}, we know that the rotation vector $v$ is realized by a periodic orbit of $f$: there exists $\tilde x\in\R^2$ such that $\tilde f^q(\tilde x) = \tilde x + (p_1,p_2)$.

Since the total Lebesgue measure of $\T^2$ is $1$ and an Euclidean disk of radius $\delta/2$ has area smaller than $1/q$, we see that any set of $q$ points of $\T^2$ contains at least two points at a distance less than $\delta$ from each other. In particular, since $x = \pi(\tilde x)$ is a periodic point of period $q$, there are two points of the orbit of $x$ which are a distance smaller than $\delta$ apart. Replacing $x$ by one of its iterates if necessary, we may assume that $x$ is one of these points, and we let $f^k(x)$ be the second one ($1\le k\le q-1$), so that $d(x, f^k(x))<\delta$ (see Figure~\ref{trajectoire}). 

Let $u_0\in \Z^2$ be such that $d(\li f^k(\li x), \li x + u_0) < \delta$, and define $v_0 = u_0/k$ and $v_1 = (qv-u_0)/(q-k)$.
The vectors $v_0$ and $v_1$ are both different from $v$: indeed, if $v_j=v$ then $iv\in \Z^2$ for some $i\in \{k, q-k\}$, contradicting the fact that $p_1, p_2, q$ are mutually coprime.
Thus, since $v$ is a convex combination of $v_0$ and $v_1$, and $v$ is an extremal point of the convex set $\rho(\tilde f)$, we deduce that either $v_0$ or $v_1$ is outside of $\rho(\tilde f)$.

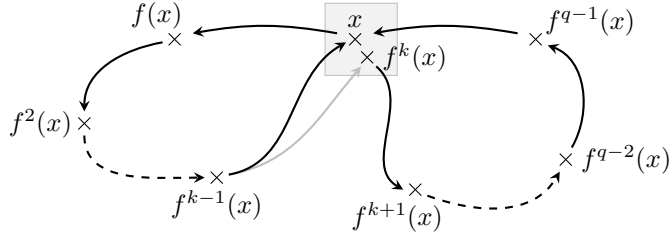
\begin{figure}
\begin{center}
\begin{tikzpicture}[scale=.8]
\draw[color=gray!50!white, fill=gray!10!white] (-.5,-.6) rectangle (.7,.6);
\draw (-.1,-.1) -- (.1,.1);\draw (-.1,.1) -- (.1,-.1); \draw (0,.3) node{$x$};
\draw (-3.1,-.1) -- (-2.9,.1);\draw (-3.1,.1) -- (-2.9,-.1);\draw (-3.3,.4) node{$f(x)$};
\draw (-4.6,-1.5) -- (-4.4,-1.3);\draw (-4.6,-1.3) -- (-4.4,-1.5);\draw (-4.55,-1.4) node[left]{$f^2(x)$};
\draw (-2.4,-2.4) -- (-2.2,-2.2);\draw (-2.4,-2.2) -- (-2.2,-2.4);\draw (-2.3,-2.75) node{$f^{k-1}(x)$};
\draw (.1,-.4) -- (.3,-.2); \draw (.1,-.2) -- (.3,-.4);\draw (.3,-.3) node[right]{$f^{k}(x)$};
\draw (.9,-2.4) -- (1.1,-2.6); \draw (.9,-2.6) -- (1.1,-2.4);\draw (.7,-2.95) node{$f^{k+1}(x)$};
\draw (3.4,-1.9) -- (3.6,-2.1); \draw (3.4,-2.1) -- (3.6,-1.9);\draw (3.6,-2) node[right]{$f^{q-2}(x)$};
\draw (3.1,-.1) -- (2.9,.1);\draw (3.1,.1) -- (2.9,-.1);\draw (3,.3) node[right]{$f^{q-1}(x)$};
\draw[->,>=stealth,thick] (-.3,.1) to [out=170,in=10] (-2.7,.1);
\draw[->,>=stealth,thick] (-3.2,-.05) to [out=190,in=90] (-4.5,-1.2);
\draw[->,>=stealth,thick,dashed] (-4.5,-1.6) to [out=-90,in=180] (-2.5,-2.3);
\draw[->,>=stealth,thick,color=gray!50] (-2.1,-2.25) to [out=10,in=-130] (.1,-.45);
\draw[->,>=stealth,thick] (-2.1,-2.25) to [out=10,in=-150] (-.15,-.05);
\draw[->,>=stealth,thick] (.35,-.45) to [out=-40,in=150] (.8,-2.5);
\draw[->,>=stealth,thick,dashed] (1.2,-2.6) to [out=-20,in=-120] (3.4,-2.2);
\draw[->,>=stealth,thick] (3.6,-1.8) to [out=60,in=-20] (3.2,-.1);
\draw[->,>=stealth,thick] (2.7,.1) to [out=170,in=10] (.3,.1);
\draw (0,-3.6);
\end{tikzpicture}
\caption{Application of the $C^0$ perturbation lemma}\label{trajectoire}
\end{center}
\end{figure}

Assume for instance that $v_0\notin \rho(\li f)$ (the other case is similar). By a $C^0$ closing lemma (Lemma~\ref{LemExtension}) we may find $g\in \Ho$ with a lift $\li g$ such that $d_{C^0}(\li f, \li g)<\delta$ and $\li g^k(\li x) = \li x + u_0$. This means that $v_0 \in \rho(\li g)\setminus \rho(\li f)$, as we wanted.
\qed

\subsection{Proof of Corollary~\ref{coro:pert}}
If the rotation set of $f$ is $\delta$-upper-stable, then by Theorem \ref{AZ} all extremal points are rational, and by Proposition \ref{pro:pert} if $q$ is the denominator of some extremal point written in reduced form, then $\delta < 2/\sqrt{\pi q}$. Thus $q > 4/(\pi \delta^2)$, and since the rotation set is compact it contains a finite number of such points, so it must be a polygon.
\qed

\subsection{Proof of Theorem~\ref{EnsRotGene}}
Let $\mc R\subset \Ho$ be the set of all homeomorphisms with a stable rotation set. Clearly $\mc R$ is open. To show the density, due to Theorem \ref{Lefeuvre} it suffices to show that any $f\in\Ho$ satisfying the shadowing property is approximated by elements of $\mc R$. Let $\li f$ a lift of such $f$, and choose $\delta_0$ such that every $\delta_0$-pseudo-orbit of $f$ is $1/2$-shadowed by some orbit of $f$. This implies that any $\delta_0$ pseudo-orbit of $\tilde f$ is shadowed by some orbit of $\tilde f$. In particular, for any homeomorphism $g\in\Ho$ satisfying $d_{C^0}(f,g)\le \delta_0$, fixing the lift $\li g$ of $g$ such that $d_{C^0}(\li f, \li g) \le \delta_0$, every orbit of $\tilde g$ is shadowed by an orbit of $\li f$. This implies that $\rho(\tilde g) \subset \rho(\tilde f)$, thus $\rho(\tilde f)$ is upper stable. We deduce from Corollary \ref{coro:pert} that $\rho(\tilde f)$ is a polygon with rational vertices. 

Since rational extremal points are realized by periodic orbits \cite{MR967632}, there exists a finite set $P$ of periodic orbits such that each vertex of $\rho(\tilde f)$ is realized as the rotation vector of a point of $P$ for the lift $\li f$. Choosing a neighborhood $U$ of $P$ which is a union of small pairwise disjoint disks, each containing a single point of $P$, we may find $g\in \Ho$ arbitrarily close to $f$ such that $g|_P = f|_P$ but the periodic points in $P$ are \emph{stable} for $g$ (\ie for any $h$ sufficiently close to $g$, there is a periodic orbit of $h$ arbitrarily close to each orbit in $P$). This perturbation can be achived applying the techniques of \cite{Daal-chao}. By the upper stability of $\rho(\li f)$ we may assume that $\rho(\li g) = \rho(\li f)$ for the lift $\li g$ of $g$ closest to $\li f$, and the same property holds in a $C^0$-neighborhood of $g$. If $z$ is a lift of an element of $P$ with rotation vector $v/q$, so that $\li g^q(z) = z + v$ with $v\in \Z^2$, the stability of the periodic point $\pi(z)$ guarantees for every $h\in \Ho$ close enough to $g$ there exists $z'$ such that $\li h^q(z')=z'+v$ for the lift $\li h$ of $h$ closest to $\li g$. Therefore $v/q\in \rho(\li h)$. This implies that $g$ has a stable rotation set, proving the density of $\mc R$.

The last claim of the theorem is that in the area-preserving case, $\rho(f)$ has nonempty interior for every $f\in \mc R$. For this, it suffices to show that there is a dense subset of elements of $\Hom_{0,\lambda}(\T^2)$ whose rotation set has nonempty interior. This is done by considering the rotation vector $\rho_\lambda(\li f)\in \rho(\li f)$ associated to Lebesgue measure $\lambda$, which is defined as $\int \phi \ud\lambda$ where $\phi \colon \T^2 \to \R^2$ is the map induced by $\li f - \id$. Given $v\in \R^2$, if $R_v$ denotes the rotation of $\T^2$ induced by $z\mapsto z+v$, then $\li f + v$ is a lift of $R_vf$ and the number $\rho_\lambda(\li f)$ has the property that $\rho_\lambda(\li f + v) = \rho_\lambda(\li f) + v$. Moreover, whenever $\rho_\lambda(\li f) = u/q$ with $u\in \Z^2$ and $q\in \Z$, the rotation vector $u/q$ is realized by a periodic point, \ie there exists $z\in \R^2$ such that $\li f^q(z) = z + u$ \cite{MR967632}. Choose an arbitrary $f\in \mc R$ and let $v=\rho_\lambda(\li f)$. Let $v_1\in \R^2$ be such that $v+v_1\in \Q^2$. If $v_1$ is small enough the map $R_{v_1} f$ is $C^0$-close to $f$ and its lift $\li f + v_1$ satisfies $\rho_\lambda(\li f+v_1) = v+v_1\in \Q^2$. Thus there exists a periodic point of $R_{v_1}f$ realizing the rotation vector $v+v_1$. As explained in the previous paragraph, we may stabilize this periodic point by an arbitrarily small perturbation of $R_{v_1}f$, obtaining a map $f_1$ whose lift $\li f_1$ closest to $\li f$ satisfies $v+v_1\in \rho(\li f_1)$, and such that the same property remains true in a neighborhood of $f_1$. We main repeat this argument twice to obtain $f_3\in \Hom_{0,\lambda}(\T^2)$ with a lift $\li f_3$ such that $\rho(\li f_3)$ contains three non-collinear elements and therefore has nonempty interior. Since such $f_3$ may be found arbitrarily close to $f$, this proves our claim.
\qed

\section{Pseudo-rotation sets and upper stability}
\label{sec:pseudo}

The \emph{$\varep$-pseudo rotation set} of a lift $\tilde f$ of $f$ is the set $\rho_\varep(\tilde f)$ consisting of the points $v\in\R^2$ for which there exists a sequence $(n_k)_{k\in \N}$ of integers with $n_k\to \infty$ and a sequence of $\varep$-pseudo-orbits $((x_i^k)_{0\le i \le n_k})_{k\in\N}$ of $\li f$ satisfying
\[\lim_{k\to +\infty} \frac{x_{n_k}^k - x_0^k}{n_k} = v.\]
The pseudo-rotation set is the intersection of the $\varep$-pseudo-rotation sets:
\[\rho_{\mathrm{p.o.}}(\tilde f) = \bigcap_{\varep>0}\rho_\varep(\tilde f).\]

This set has already been studied by L. Jonker and L. Zhang in \cite{MR1653236, TheseZhang} (see also \cite{MR967632}).
As is the case with rotation sets, the $\varep$-pseudo-rotation sets of different lifts of $f$ differ by an integer translation, so one can define the $\rho_{\varep}(f)$ and $\rho_{p.o.}(f)$ as the corresponding sets for any lift of $f$, modulo integer translations.

\begin{prop}
For any lift $\tilde f$ of $f\in\Ho$, one has $\rho_{\mathrm{p.o.}}(\tilde f) = \rho(\tilde f)$.
\end{prop}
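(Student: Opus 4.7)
The inclusion $\rho(\tilde f) \subset \rho_{\mathrm{p.o.}}(\tilde f)$ is immediate from the definitions, since any true orbit is also an $\varepsilon$-pseudo-orbit for every $\varepsilon>0$. The substance of the proposition lies in the reverse inclusion. My plan is to extract, from pseudo-orbits whose average displacement tends to $v$, an $f$-invariant Borel probability measure realizing $v$, and then to invoke the Misiurewicz--Ziemian variational characterization $\rho(\tilde f) = \{\int \phi\, d\mu : \mu\text{ is an }f\text{-invariant Borel probability}\}$, where $\phi\colon \T^2\to \R^2$ is the $\Z^2$-periodic displacement induced by $\tilde f - \id$.

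Given $v \in \rho_{\mathrm{p.o.}}(\tilde f)$, for each $k\geq 1$ the hypothesis $v \in \rho_{1/k}(\tilde f)$ lets me (via a diagonal choice) select a $(1/k)$-pseudo-orbit $(y_0^k,\dots,y_{n_k}^k)$ of $\tilde f$ with $n_k \geq k$ and $|(y_{n_k}^k - y_0^k)/n_k - v| < 1/k$. I form the empirical measure $\mu_k = \tfrac{1}{n_k}\sum_{i=0}^{n_k-1}\delta_{\pi(y_i^k)}$ on $\T^2$ and pass to a weak-$*$ subsequential limit $\mu$, which exists by compactness of the space of probability measures on $\T^2$.

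Two verifications will finish the proof. First, $\mu$ is $f$-invariant: for any continuous $\psi\colon \T^2\to \R$, rewriting $\int\psi\,d(f_*\mu_k)-\int\psi\,d\mu_k$ with the pseudo-orbit condition produces a telescoping sum giving a boundary term of size $2\|\psi\|_\infty/n_k$ plus $n_k$ terms each bounded by the modulus of continuity of $\psi$ evaluated at $1/k$, both of which tend to $0$. Second, $\int\phi\,d\mu = v$: using the $\Z^2$-periodicity of $\phi$ and lifting to $\R^2$, one has $\phi(\pi(y_i^k)) = \tilde f(y_i^k) - y_i^k = (y_{i+1}^k - y_i^k) + O(1/k)$, so $\int\phi\,d\mu_k$ telescopes to $(y_{n_k}^k - y_0^k)/n_k + O(1/k) \to v$. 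Applying the Misiurewicz--Ziemian characterization to $\mu$ then gives $v \in \rho(\tilde f)$.

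The one place where care will be needed is maintaining coherence between the $\R^2$ and $\T^2$ pictures: the pseudo-orbit closeness is measured in $\R^2$, which is precisely what allows the telescope in the second verification to collect $(y_{n_k}^k - y_0^k)/n_k$ without being corrupted by integer jumps. An alternative plan via the $C^0$-perturbation Lemma~\ref{LemExtension}---realize each pseudo-orbit as a true orbit of some nearby $g_k$ and invoke upper semicontinuity of $\rho$---looks tempting, but it requires first closing each pseudo-orbit into a periodic one in order to produce an honest element of $\rho(\tilde g_k)$, and the closing error is $O(1)$ rather than $O(\varepsilon)$. The measure-theoretic route sidesteps this difficulty entirely.
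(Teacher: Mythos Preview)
Your argument is correct and follows essentially the same route as the paper's proof: form empirical measures from the pseudo-orbits, pass to a weak-$*$ limit, verify $f$-invariance and that $\int \phi\,d\mu = v$, and conclude via the Misiurewicz--Ziemian characterization of $\rho(\tilde f)$. You are in fact slightly more careful than the paper in distinguishing the lifted pseudo-orbit in $\R^2$ from its projection to $\T^2$ when defining $\mu_k$.
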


\begin{proof}
Clearly, $\rho(\tilde f)\subset \rho_{\mathrm{p.o.}}(\tilde f)$. For the other inclusion, consider $v\in\rho_{\mathrm{p.o.}}(\tilde f)$. Then, for every $k\in\N$, there exists $\varep\in]0,1/k[$ and an $\varep$-pseudo orbit $(x_i^k)_{0\le i \le n_k}$ such that
\begin{equation}\label{closeV}
\left\|\frac{x^k_{n_k} - x^k_0}{n_k} - v\right\|\le\frac{1}{k}.
\end{equation}
Let us define the measure
\[\mu_k = \frac{1}{n_k}\sum_{i=0}^{n_k-1} \delta_{x^k_i}.\]
By compactness of the set of Borel probability measures in the weak-$*$ topolgoy, one can find an accumulation point $\mu_\infty$ of the sequence $(\mu_k)_{k\in\N}$. An easy calculation using the fact that $(x_i^k)_{0\le i \le n_k}$ is a $1/k$-pseudo-orbit shows that $\mu_\infty$ is $f$-invariant. If $\phi\colon \T^2\to \R^2$ denotes the map induced by the $\Z^2$-periodic map $x\mapsto \li{f}(x) - x$, a straightforward computation shows that
\[\left|\frac{x^k_{n_k} - x^k_0}{n_k} - \int \phi(x) \ud \mu_k(x)\right|\underset{k\to\infty}{\longrightarrow} 0.\]
Combined with Equation~\eqref{closeV}, this implies that $\int \phi \ud \mu_\infty = v$,
so the mean rotation vector of $\mu_\infty$ is $v$ and therefore $v\in \rho(\tilde f)$, since the set of mean rotation numbers of invariant probabilities coincides with $\rho(\tilde f)$ (see \cite{MR1053617}).
\end{proof}

Given $f\in \Ho$, define the \emph{oscilation} of $f$ as 
$$\osc(f) = \sup_{x,y\in \R^2} \norm{(\li f(x) - x) - (\li f(y) - y)}$$
where $\li f$ is any lift of $f$. This is independent of the choice of the lift.

The next theorem includes Propositions \ref{pro:pseudo-stable} and \ref{pro:deviations}.

\begin{theoreme} Suppose that the rotation set of $f\in \Ho$ is $\epsilon$-upper-stable with $\epsilon<1$. Then $\rho_{\epsilon/2}(\li f) = \rho(\li f)$ for any lift $\li f$ of $f$. Moreover, for each $\epsilon/2$-pseudo-orbit $(\li x_i)_{i\in \N}$ one has, for any $n\in\N$,
$$d\big(\li x_n - \li x_0, n\rho(\li f)\big) \leq \frac{16}{\pi \epsilon^2}\big(\osc(f)+2\big) + \epsilon.$$
If $(\li x_i)_{i\in \N}$ is a real orbit, one can reduce the constant above to $4(\osc(f)+1)/(\pi\epsilon^2) + \epsilon$.
\end{theoreme}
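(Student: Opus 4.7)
The plan is to prove the deviation bound first; the equality $\rho_{\epsilon/2}(\tilde f) = \rho(\tilde f)$ will then follow as a limiting corollary, since the reverse inclusion is trivial and for $v\in \rho_{\epsilon/2}(\tilde f)$ realized as $\lim_{k}(\tilde x^k_{n_k}-\tilde x^k_0)/n_k$, the bound furnishes $v_k^*\in \rho(\tilde f)$ with $\|v-v_k^*\|\le C/n_k\to 0$, yielding $v\in\rho(\tilde f)$ by closedness. For the bound itself I would split into the regimes $n<N_0$ and $n\ge N_0$, where $N_0 = 16/(\pi\epsilon^2)$ (or $N_0' = 4/(\pi\epsilon^2)$ in the real-orbit case). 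The small $n$ case uses that $\rho(\tilde f)\subset \overline{\conv}\,\phi(\R^2)$ with $\phi = \tilde f - \id$, and this hull has diameter $\osc(f)$, so $\|\phi(y)-v\|\le \osc(f)$ for any $y\in\R^2$ and any $v\in\rho(\tilde f)$; writing $\tilde x_n-\tilde x_0=\sum_{k=0}^{n-1} \phi(\tilde x_k) + \sum w_k$ with $\|w_k\|<\epsilon/2$ yields $\|\tilde x_n-\tilde x_0-nv\|\le n(\osc(f)+\epsilon/2)$, which fits within the theorem's constant for $n<N_0$.

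For $n\ge N_0$ the disk-packing pigeonhole furnishes a near-return pair $0\le i<j\le n$ with $d_{\T^2}(\pi(\tilde x_i),\pi(\tilde x_j))\le 2/\sqrt{\pi(n+1)}<\epsilon/2$; let $u\in\Z^2$ be the nearest integer to $\tilde x_j-\tilde x_i$, so $\|u-(\tilde x_j-\tilde x_i)\|<\epsilon/2$. I would then apply a lift-aware version of Lemma~\ref{LemExtension}, for instance by realizing $g = h\circ f$ where $h$ is a composition of local transpositions of small disks chosen to lift canonically to $\R^2$, to produce $g\in \Ho$ with $d_{C^0}(f,g)<\epsilon$ whose closest lift $\tilde g$ satisfies $\tilde g(\tilde x_k)=\tilde x_{k+1}$ for $i\le k\le j-2$ and $\tilde g(\tilde x_{j-1})=\tilde x_i+u$ \emph{exactly}. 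This lift-awareness is essential because for $\epsilon\in[2/3,1)$ the bound $d_{C^0}(\tilde f,\tilde g)<\epsilon$ alone would only force these equalities up to a unit integer. Then $\pi(\tilde x_i)$ is $g$-periodic of period $j-i$ with rotation vector $v^* = u/(j-i)$, and upper-stability gives $v^*\in\rho(\tilde g)\subset\rho(\tilde f)$. Decomposing $\tilde x_n-\tilde x_0=(\tilde x_i-\tilde x_0)+(\tilde x_j-\tilde x_i)+(\tilde x_n-\tilde x_j)$, the middle term equals $(j-i)v^*$ up to error $<\epsilon/2$, while each outer piece is bounded by its length times $(\osc(f)+\epsilon/2)$ via the small $n$ estimate.

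To fit the total within $N_0(\osc(f)+2)+\epsilon$ requires the combined outer length $i+(n-j)$ to be at most $\sim N_0$. I would arrange this by applying the pigeonhole to the combined subset $\{\pi(\tilde x_0),\ldots,\pi(\tilde x_{N_0-1})\}\cup\{\pi(\tilde x_{n-N_0+1}),\ldots,\pi(\tilde x_n)\}$, so that the pair $(i,j)$ extracted either ``crosses'' both halves (giving $i<N_0$ and $n-j<N_0$ directly, which is the favorable case) or lies within a single window. The main obstacle I foresee is this second, ``in-window'', case: one must extract separate near-returns in each window and perform a \emph{simultaneous} multi-loop closing, producing a single $g$ with $d_{C^0}(f,g)<\epsilon$ in which multiple local loops close, each contributing a rational rotation vector in $\rho(\tilde f)$ by upper-stability, so that the total displacement decomposes as a convex combination of these (valid by convexity of $\rho(\tilde f)$) plus a residual of bounded length. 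The real-orbit case is strictly easier: $\tilde f(\tilde x_k)=\tilde x_{k+1}$ holds exactly, the $\epsilon/2$ slack disappears, the threshold reduces to $N_0'=4/(\pi\epsilon^2)$, and the lift identification works cleanly for all $\epsilon<1$, yielding the sharper constant stated.
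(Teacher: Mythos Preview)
Your overall strategy---close a pseudo-orbit segment to a periodic orbit via Lemma~\ref{LemExtension}, invoke $\epsilon$-upper-stability to place the resulting rational vector in $\rho(\tilde f)$, and bound the leftover displacement using $\osc(f)$---is the right one and coincides with the paper's. But the organization you propose leaves a real gap, and even your favorable ``crossing'' case does not recover the stated constant: there the combined outer length $i+(n-j)$ is only bounded by $2N_0$, giving a leading term $\tfrac{32}{\pi\epsilon^2}\osc(f)$, twice what is claimed. The ``in-window'' case you flag as the main obstacle is genuinely unresolved: you gesture at simultaneous multi-loop closing plus convexity, but give no mechanism bounding the number of loops or the total residual length.

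The paper bypasses both issues with a single greedy decomposition that trades your ``one big loop plus two long tails'' for ``at most $k$ loops separated by single-step gaps''. With $\delta=\epsilon/2$, define $-1=i_{-1}<i_0<\cdots<i_k=n$ by letting $i_0$ be the \emph{largest} index in $\{0,\dots,n\}$ with $d(x_{i_0},x_0)<\delta$, and, given $i_j<n$, letting $i_{j+1}$ be the largest index in $\{i_j+1,\dots,n\}$ with $d(x_{i_{j+1}},x_{i_j+1})<\delta$. Maximality forces the relevant base-points to be pairwise $\delta$-separated, so the disk-packing bound gives $k\le 4/(\pi\delta^2)$ outright, with no case split on $n$. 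Each block $[i_{j-1}+1,i_j]$ is a near-return and is closed, exactly as you describe, to yield $v_j\in\rho(\tilde f)$ with error $<\delta$; the convex combination $u=\tfrac{1}{n-k}\sum_j(i_j-i_{j-1}-1)v_j$ lies in $\rho(\tilde f)$. The residual is now just the $k$ \emph{single} steps $\tilde x_{i_{j-1}+1}-\tilde x_{i_{j-1}}=\phi(x_{i_{j-1}})+O(\delta)$, and since $u\in\conv\phi(\T^2)$ one has $\|\phi(\cdot)-u\|\le\osc(f)$. Summing yields
\[
\|\tilde x_n-\tilde x_0-nu\|\le k(\osc(f)+\delta)+(k+1)\delta\le \frac{4}{\pi\delta^2}(\osc(f)+2)+\delta,
\]
which is the bound stated.

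So your multi-loop-plus-convexity instinct is exactly right; the greedy index sequence is the missing device that implements it uniformly, bounding the number of loops \emph{a priori} and shrinking each residual gap to one step (contributing $\osc(f)$ rather than length times $\osc(f)$). Your lift-awareness remark is well taken; the perturbation of Lemma~\ref{LemExtension} is of the form $g=h\circ f$ with $h$ compactly supported in small disks and isotopic to the identity, so $\tilde g=\tilde h\circ\tilde f$ is automatically the lift closest to $\tilde f$.
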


\begin{proof}
It suffices to prove the estimate.
Assume that the rotation set of $f$ is $\epsilon$-upper-stable and let $\delta = \epsilon/2$. Fix a lift $\li f$ of $f$, and let $\phi\colon \T^2\to \R^2$ denote the map induced by  $\li f - \id$. Suppose $(x_i)_i$ is a $\delta$-pseudo-orbit for $f$. Given $\li x_0 \in \pi^{-1}(x_0)$ there is a unique lift to a pseudo-orbit $(\li x_i)_i$ of $\li f$ (because $\delta<1/2$).	

We first claim that if $d(x_i, x_j)<\delta$ and $i\leq j$, then $(\li x_j - \li x_i) = (j-i)v + \eta$ for some $v\in \rho(\li f)$ and $\eta\in \R^2$ with $\norm{\eta}<\delta$. Indeed, there exists $w\in \Z^2$ such that $\eta := \li x_j - \li x_i - w$  satisfies $\norm{\eta}<\delta$. Since $d(\li f(\li x_{j-1}), \li x_j) < \delta$ one has $d(\li f(\li x_{j-1}), \li x_i + w) < \epsilon$. Note that $\li x_i, \li x_{i+1}, \dots, \li x_{j-1}, \li x_i + w$ projects to a periodic $\epsilon$-pseudo-orbit, so using the $C^0$ perturbation lemma (Lemma \ref{LemExtension}) one may ``close'' it to a periodic orbit with rotation vector $v = w/(j-i)$, obtaining  $g\in \Ho$ such that $d(f,g)<\epsilon$ and $v\in \rho(\li g)$ for the lift $\li g$ of $g$ closest to $\li f$. Due to the $\epsilon$-upper stability, this implies $v\in \rho(\li f)$, and our claim follows.

We now fix $n\in\N$, and define a sequence $-1 = i_{-1} < i_0 < i_1 < \cdots < i_k = n$ recursively as follows: $i_0$ is the largest element of $\{0,\dots, n\}$ such that $d(x_{i_0}, x_0) < \delta$, and assuming $i_j$ has been defined and $i_j<n$, $i_{j+1}$ is the largest element of $\{i_j+1\,\dots, n\}$ such that $d(x_{i_{j+1}}, x_{i_j+1}) <\delta$. The sequence produced in this way has the property that $d(x_{i_a}, x_{i_b}) \geq \delta$ for $0 \leq a < b \leq k$, so $k\leq 4/(\pi \delta^2)$ as in the proof of Proposition \ref{pro:pert}.
Since $d(x_{i_j}, x_{i_{j-1}+1})<\delta$ for $0\leq j\leq k$, from the previous claim we have 
$\li x_{i_j} - \li x_{i_{j-1}+1} = (i_j - i_{j-1} - 1)v_j + \eta_j$ for some $v_j\in \li \rho(\li f)$ and $\eta_j\in \R^2$ with $\norm{\eta_j}<\delta$. Note that $\sum_{j=0}^k (i_j - i_{j-1} - 1) = n-k$; thus $u = \frac{1}{n-k} \sum_{j=0}^{k} (i_j - i_{j-1} - 1)v_j$ is a convex combination of the vectors $v_j$, implying that $u\in \rho(\li f)$. Moreover,
$$ \li x_n - \li x_0 = \bigg(\sum_{j=1}^{k} \li x_{i_{j-1}+1} - \li x_{i_{j-1}}\bigg) + \bigg(\sum_{j=0}^{k} \li x_{i_j} - \li x_{i_{j-1}+1}\bigg),$$
thus
$$\li x_n - \li x_0 - nu =  \bigg(\sum_{j=1}^{k} \li x_{i_{j-1}+1} - \li x_{i_{j-1}}\bigg) - ku + \sum_{j=0}^k \eta_j.$$
Note also that since $u\in \rho(\li f)$, there exists an invariant measure $\mu$ for $f$ with mean rotation vector $\int \phi d\mu = u$, where $\phi\colon \T^2\to \R^2$ is the map induced by $\li f -\id$ (see \cite{MR1053617}). This implies that $u$ lies in the convex hull of $\phi(\T^2)$. Noting that the diameter of the convex hull of $\phi(\T^2)$ is equal to the diameter of $\phi(\T^2)$, which is $\osc(f)$, we have that $\norm{\phi(z)-u} \leq \osc(f)$ for any $z\in \T^2$. Since $\li x_{i_{j-1}+1} - \li x_{i_{j-1}} = \phi(x_{i_{j-1}}) + \delta_j'$ for some $\delta_j'$ with $\norm{\delta_j'}<\delta$ it follows that $$\norm{\li x_n - \li x_0 - nu} \leq k(\osc(f)+\delta) + \Big\|\sum_{j=0}^k \eta_j\Big\| \leq  k(\osc(f)+\delta) + (k+1)\delta,$$ and since $\delta<1$ we conclude
$$d\big(\li x_n - \li x_0, n\rho(\li f)\big) \leq \frac{4}{\pi \delta^2}\big(\osc(f)+2\big) + \delta,$$
leading to the claim of the theorem.

Note that if $(\li x_i)_{i\in \N}$ is a real orbit, one may work with $\delta = \epsilon$ instead of $\epsilon/2$, and the terms $\delta_j$ are zero in the last computation, justifying the final remark of the theorem.
\end{proof}

\bibliographystyle{koro} 
\bibliography{gui-koro}

\end{document}